\newtheorem{theorem}{Theorem}
\newtheorem*{theorem*}{Theorem}
\newtheorem{corollary}{Corollary}
\newcommand{\di}{\mbox{d}}
\newcommand{\N}{\mbox{$\mathcal{N}$}}
\newcommand{\h}{\mbox{$\mathcal{H} $ }}
\title{On the classification of contact metric $(k,\mu)$--spaces via tangent hyperquadric bundles}
\author{E. Loiudice and A. Lotta}
\date{}
\begin{document}
 \maketitle

 \begin{abstract}
 We classify locally the contact metric $(k,\mu)$--spaces whose Boeckx invariant is $\le -1$ as tangent hyperquadric bundles
  of Lorentzian space forms.
 \end{abstract}
 
{\small
{\em Mathematics Subject Classification (2000)}: 53C35, 53C25.

\noindent
{\em Keywords and phrases}: contact metric $(k,\mu)$-space, Boeckx invariant, symmetric $CR$ manifold,  tangent hyperquadric bundle.
}

\section{Introduction}

Contact $(k,\mu)$--spaces constitute a relevant class of objects studied in contact metric geometry; these spaces were
introduced by Blair, Koufogiorgos and Papantoniou 
in \cite{Blair K P} as a generalization of Sasakian manifolds.
 Indeed, a \emph{contact metric $(k,\mu)$ manifold} is a contact metric manifold $(M,\varphi, \xi,\eta,g)$ such that
 \begin{equation}
 \label{kmu}
  R(X,Y)\xi=k(\eta(Y)X-\eta(X)Y)+\mu(\eta(Y)hX-\eta(X)hY),
 \end{equation}
where $X,Y\in \mathfrak{X}(M)$, $k$, $\mu$ are real numbers and $h:=\frac{1}{2}\mathcal{L}_{\xi}\varphi$. Here $\mathcal{L}_{\xi}\varphi$ denotes the Lie derivative of $\varphi$ in the direction of $\xi$. Recall that Sasakian manifolds are characterized by the above equation with $k=1$ and $h=0$.

\smallskip
Looking at contact metric manifolds as strongly pseudo-convex (almost) $CR$ manifolds, Dileo and Lotta \cite{DileoLotta} showed that
the $(k,\mu)$--condition is equivalent to the local $CR$-symmetry with respect to the Webster
metric $g$, according to the general notion introduced by Kaup and Zaitsev in \cite{Kaup Zaitsev} (see also section \ref{Sec CR}).
In this context, another characterization was given in terms of the parallelism of  the Tanaka--Webster curvature and torsion \cite{Boeckx-Cho}. 

\smallskip
Boeckx gave a crucial contribution to the problem of classifying these manifolds; 
after showing that every non-Sasakian contact $(k,\mu)$-space is locally homogeneous and strongly locally $\varphi$--symmetric \cite{Boeckx locally phi-symmetric},  
in \cite{Boeckx} he defined a scalar invariant $I_M$ which completely determines a contact $(k,\mu)$--space $M$ locally up 
to equivalence and up to a $D$-homotetic deformation of its contact metric structure. 

\smallskip
A standard example is the tangent sphere bundle $T_1M$ of a Riemannian manifold $M$ with constant sectional curvature $c\not =1$.
Being an hypersurface of $TM$, which is equipped with a natural
strictly almost K\"ahler structure $(J,G)$, where $G$ is the Sasaki metric, $T_1M$ inherits a standard
contact metric structure (for details, see for instance
\cite{Blair2010Book}). In particular, the Webster metric $g$ of $T_1M$ 
is a scalar multiple of $G$. The corresponding Boeckx invariant is given by:
$$
I_{T_1M}=\frac{1+c}{|1-c|}.
$$
Hence, as $c$ varies in $\mathbb{R} \smallsetminus \{1\}$, $I_{T_1M}$ assumes all the real values strictly greater than $-1$.

The case $I\leqslant -1$ seems to lead to models of different nature.
Namely, Boeckx found examples of contact metric $(k,\mu)$ spaces,  for every value of the invariant $I\leqslant -1$, namely
a two parameter family of  Lie groups with a left--invariant contact metric structure. However,
he gave no geometric description of these examples.

\smallskip
The purpose of this paper is to show that, actually, one can construct the models with $I\leqslant -1$
simply by replacing a Riemannian space form $(M,g)$ with a Lorentzian  one,
taking instead of $T_1M$ the so-called tangent hyperquadric bundle:
$$T_{-1}M=\{(p,v)\in TM:\quad g(v,v)=-1\}.$$
Indeed, the formula for the Boeckx invariant changes as follows:
$$I_{T_{-1}M}=\frac{c-1}{|c+1|},$$
where $c$ varies in $\mathbb{R}\smallsetminus\{-1\}$, so that for $c\leqslant 0$, these examples cover all possible values of the Boeckx
invariant in $(-\infty,-1]$.

\smallskip
We remark that, as in the Riemannian case, $T_{-1}M$ is again a strictly pseudo-convex $CR$ hypersurface of $(TM,J)$ (see also \cite{Perrone}
for a recent study of these manifolds from the point of view of $CR$ geometry).
However, in this case the Webster metric $g$
is no longer a scalar multiple of the (semi-Riemannian) Sasaki metric of $TM$.

 \section{Preliminaries}
\subsection{Contact metric $(k,\mu)$ manifolds}
 
In this section we recall some basic results concerning the class of contact metric manifolds under consideration. As a general reference
on contact metric geometry, we refer the reader to Blair's book \cite{Blair2010Book}.
 
\begin{theorem*}[\cite{Blair K P}]\label{th (k,mu) Blair}
 Let $(M,\varphi, \xi,\eta,g)$ be a contact metric $(k,\mu)$ manifold. Then necessarily $k\leqslant 1$. Moreover, if $k=1$ then $h=0$ and $(M,\varphi, \xi,\eta,g)$ is Sasakian. If $k<1$, the contact metric structure is not Sasakian and $M$ admits three mutually orthogonal integrable distributions $\mathcal{D}(0)$, $\mathcal{D}(\lambda)$ and $\mathcal{D}(-\lambda)$ corresponding to the eigenspaces of $h$, where $\lambda=\sqrt{1-k}$.
\end{theorem*}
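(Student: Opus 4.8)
The plan is to extract from the $(k,\mu)$--condition \eqref{kmu} a single algebraic identity for $h$, and then read off all three assertions from the spectral theory of the symmetric operator $h$, reserving the integrability statement for a separate argument based on the covariant derivative of $h$.

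First I would specialize \eqref{kmu} by setting $Y=\xi$. Using $\eta(\xi)=1$ and $h\xi=0$, this expresses the Jacobi operator $l:=R(\cdot,\xi)\xi$ in closed form as $lX=-k\varphi^2 X+\mu hX$. On the other hand, on any contact metric manifold one has the general identity $\varphi l\varphi-l=2(h^2+\varphi^2)$ together with the structural relations $h\xi=0$, $h\varphi=-\varphi h$ and the self-adjointness of $h$ (all standard, see \cite{Blair2010Book}). Substituting the explicit $l$ into this identity and simplifying with $\varphi^2=-I+\eta\otimes\xi$, $\varphi^3=-\varphi$ and $\varphi h\varphi=h$, the $\mu$--terms cancel and the left-hand side collapses to $2k\varphi^2$; comparing with the right-hand side yields the fundamental relation $h^2=(k-1)\varphi^2$. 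This one equation drives everything that follows.

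Now I would argue spectrally. Since $h$ is symmetric it is diagonalizable with real eigenvalues, and $h\xi=0$ forces $h$ to preserve the contact distribution $\ker\eta$. On $\ker\eta$ we have $\varphi^2=-I$, so there $h^2=(1-k)I$; hence every eigenvalue of $h$ on $\ker\eta$ squares to $1-k$, reality forces $1-k\geqslant 0$, i.e. $k\leqslant 1$, and the nonzero eigenvalues are exactly $\pm\lambda$ with $\lambda=\sqrt{1-k}$. If $k=1$ then $h^2=0$, and $g(hX,hX)=g(h^2X,X)=0$ gives $h=0$; the vanishing of $h$ means $\xi$ is Killing, and a $K$--contact manifold satisfying \eqref{kmu} with $k=1$ is Sasakian, which is the second assertion. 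If instead $k<1$ then $\lambda>0$, $h\neq 0$ (so $M$ is not Sasakian), and the eigenspace decomposition reads $TM=\mathcal{D}(0)\oplus\mathcal{D}(\lambda)\oplus\mathcal{D}(-\lambda)$ with $\mathcal{D}(0)=\mathbb{R}\xi$; these are mutually orthogonal as eigenspaces of a symmetric operator for distinct eigenvalues. I also record that $h\varphi=-\varphi h$ shows $\varphi$ interchanges $\mathcal{D}(\lambda)$ and $\mathcal{D}(-\lambda)$.

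The remaining and genuinely harder point is integrability of $\mathcal{D}(\lambda)$ and $\mathcal{D}(-\lambda)$, the distribution $\mathcal{D}(0)$ being automatic. For $X,Y\in\mathcal{D}(\lambda)$, differentiating $hY=\lambda Y$ with $\lambda$ constant gives $h\nabla_XY=\lambda\nabla_XY-(\nabla_Xh)Y$, whence $h[X,Y]-\lambda[X,Y]=(\nabla_Yh)X-(\nabla_Xh)Y$. I would first check that $[X,Y]$ has no $\xi$--component: using $\nabla_X\xi=-\varphi X-\varphi hX=-(1+\lambda)\varphi X$ and the skew-symmetry of $\varphi$, one finds $g([X,Y],\xi)$ proportional to $g(Y,\varphi X)$, which vanishes since $\varphi X\in\mathcal{D}(-\lambda)\perp\mathcal{D}(\lambda)$. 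Thus $h[X,Y]-\lambda[X,Y]$ lies in $\ker\eta$. On the other side I would invoke the explicit formula for $\nabla h$ on a $(k,\mu)$--space, whose evaluation on two vectors of $\ker\eta$ is purely a multiple of $\xi$, namely $(\nabla_Xh)Y=\{(1-k)g(X,\varphi Y)+g(X,h\varphi Y)\}\xi$, so that $(\nabla_Yh)X-(\nabla_Xh)Y$ is proportional to $\xi$. An element of $\ker\eta$ equal to a multiple of $\xi$ must vanish, giving $h[X,Y]=\lambda[X,Y]$; the same computation with $-\lambda$ handles $\mathcal{D}(-\lambda)$. The main obstacle is precisely establishing that covariant derivative formula for $h$, which requires a nontrivial computation from the contact metric axioms together with $h^2=(k-1)\varphi^2$; once it is in hand, integrability follows from the orthogonal-splitting argument just sketched.
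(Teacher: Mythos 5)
The paper offers no proof of this statement: it is quoted verbatim from Blair--Koufogiorgos--Papantoniou \cite{Blair K P} as a preliminary, so there is nothing internal to compare against. Measured against the original source, your argument is essentially the classical one and is correct in all its steps: the derivation of $l=-k\varphi^2+\mu h$, the use of the general contact metric identity $\varphi l\varphi-l=2(h^2+\varphi^2)$ to obtain $h^2=(k-1)\varphi^2$, the spectral reading of this relation on $\ker\eta$ (giving $k\leqslant 1$, the eigenvalues $0,\pm\lambda$, $h=0$ and Sasakianity when $k=1$, and the orthogonal eigenspace decomposition when $k<1$), and the integrability argument via $h[X,Y]-\lambda[X,Y]=(\nabla_Y h)X-(\nabla_X h)Y$ combined with $\eta([X,Y])=0$. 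The one ingredient you state but do not establish is the formula $(\nabla_X h)Y=\{(1-k)g(X,\varphi Y)+g(X,h\varphi Y)\}\xi$ for $X,Y\in\ker\eta$; you flag this honestly, and it is exactly Lemma 3.3 of \cite{Blair K P}, whose proof is a genuine computation (differentiating the $(k,\mu)$ identity and using $\nabla_X\xi=-\varphi X-\varphi hX$ together with $h^2=(k-1)\varphi^2$). So the proposal is a faithful reconstruction of the standard proof with that single lemma taken on credit rather than proved.
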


Non-Sasakian contact metric $(k,\mu)$ manifolds was completely classified by Boeckx in \cite{Boeckx}. We have that $k<1$ and the real number 
$$
I_M=\frac{1-\frac{\mu}{2}}{\sqrt{1-k}},
$$
is an invariant for the $(k,\mu)$ structure; moreover: 
\begin{theorem*}[\cite{Boeckx}]
Let $(M_i, \varphi_i, \xi_i, \eta_i, g_i )$, $i=1, 2$, be two non-Sasakian $(k_i, \mu_i)$-spaces of the same dimension. Then $I_{M_1}=I_{M_2}$ if and only if, up to a $D$-homothetic
transformation, the two spaces are locally isometric as contact metric spaces. In particular, if both spaces are simply connected and complete, they are globally isometric up to a $D$-homothetic transformation.
\end{theorem*}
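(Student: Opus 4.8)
The plan is to prove the two implications separately, the forward direction being the substantial one. I would first record the behaviour of the pair $(k,\mu)$ under a $D$-homothetic transformation $\bar\eta=a\eta$, $\bar\xi=a^{-1}\xi$, $\bar\varphi=\varphi$, $\bar g=ag+a(a-1)\eta\otimes\eta$ with $a>0$: a direct computation, using that such a deformation again produces a contact metric structure and that $\bar h=a^{-1}h$, shows that a non-Sasakian $(k,\mu)$-space is carried to a $(\bar k,\bar\mu)$-space with $1-\bar k=(1-k)/a^{2}$ and $1-\bar\mu/2=(1-\mu/2)/a$. Substituting into the definition of $I_M$ gives $I_{\bar M}=I_M$, so $I$ is a $D$-homothetic invariant; combined with the obvious fact that a contact metric isometry preserves both $k$ and $\mu$, this already settles the ``if'' direction.

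For the converse I would use the same formulas to reduce to a normalized model. Since $\lambda=\sqrt{1-k}>0$ in the non-Sasakian case, as $a$ ranges over $(0,\infty)$ the quantity $\sqrt{1-\bar k}=\lambda/a$ ranges over all of $(0,\infty)$, while $\bar\mu$ is then forced by the constraint $1-\bar\mu/2=I\sqrt{1-\bar k}$. Hence the $D$-homothety orbit of a given space is exactly the level set $\{I_M=\text{const}\}$ in the $(k,\mu)$--plane. Consequently, given $I_{M_1}=I_{M_2}$ and equal dimension, I can apply suitable $D$-homothetic transformations to $M_1$ and $M_2$ so as to arrange $(k_1,\mu_1)=(k_2,\mu_2)=:(k_0,\mu_0)$, a fixed pair depending only on the common value of $I$ and on the dimension. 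The problem is thereby reduced to the rigidity statement: two non-Sasakian contact metric spaces of the same dimension sharing the same constants $(k_0,\mu_0)$ are locally isometric as contact metric manifolds.

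The heart of the argument is this rigidity, which I would derive from the characterization of the $(k,\mu)$--condition as the parallelism $\nabla R=0$, $\nabla\tau=0$ of the Tanaka--Webster curvature and torsion \cite{Boeckx-Cho}. First I would compute $R$ and $\tau$ explicitly on the $h$-eigenspace decomposition $\mathcal{D}(0)\oplus\mathcal{D}(\lambda)\oplus\mathcal{D}(-\lambda)$: because $\varphi$, $g$, $\eta$ and $h$ act in a universal way on these distributions and $\lambda=\sqrt{1-k_0}$, the pointwise tensors $R_p$ and $\tau_p$ are determined entirely by $(k_0,\mu_0)$ and the dimension, up to a linear isometry identifying the model tangent spaces and intertwining the structure tensors. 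Parallelism then means that the whole local geometry is encoded by the single infinitesimal datum $(R_p,\tau_p,g_p,\varphi_p,\eta_p)$, so that a Cartan--Ambrose--Singer / Singer-type reconstruction for the Tanaka--Webster connection (equivalently, exploiting the strong local $\varphi$-symmetry of \cite{Boeckx locally phi-symmetric}) produces, for any base points $p_1\in M_1$ and $p_2\in M_2$, a local contact metric isometry sending $p_1$ to $p_2$. This is the step I expect to be the main obstacle: one must check that parallelism genuinely kills all higher-order invariants and that the linear algebra of the eigenspace decomposition leaves no hidden modulus beyond $(k,\mu)$ and the dimension, so the infinitesimal models truly coincide.

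Finally, for the global assertion I would promote the local isometries to a global one under completeness and simple connectedness by the standard monodromy argument (the Cartan--Ambrose--Hicks theorem in its homogeneous form): local homogeneity together with completeness makes the space globally homogeneous, the parallelism of $R$ and $\tau$ guarantees that a germ of isometry extends along every path, and simple connectedness removes the monodromy obstruction. This yields a global isometry up to a $D$-homothetic transformation, completing the proof.
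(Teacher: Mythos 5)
This statement is quoted in the paper from \cite{Boeckx} without proof, so there is no in-paper argument to compare against; I can only assess your proposal on its own terms and against Boeckx's original argument, whose overall architecture (reduce by $D$-homothety to fixed $(k,\mu)$, then prove rigidity via local homogeneity of the infinitesimal model) your plan correctly reproduces. Your treatment of the $D$-homothetic part is complete and correct: the transformation laws $1-\bar k=(1-k)/a^{2}$, $1-\bar\mu/2=(1-\mu/2)/a$ do show that $I$ is a $D$-homothetic invariant and that its level sets in the half-plane $k<1$ are exactly the $D$-homothety orbits, which settles the ``if'' direction and the normalization step.

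The genuine gap is in the rigidity step, and it is larger than your hedging suggests. You assert that $R_p$ and $\tau_p$ ``are determined entirely by $(k_0,\mu_0)$ and the dimension'' because $\varphi$, $g$, $\eta$, $h$ ``act in a universal way'' on $\mathcal{D}(0)\oplus\mathcal{D}(\lambda)\oplus\mathcal{D}(-\lambda)$. For the torsion $\tau$ this is immediate, since $\tau$ is built from $h$ and $\varphi$. But for the curvature it is not: the defining identity \eqref{kmu} only prescribes $R(X,Y)\xi$, i.e.\ the components of $R$ involving the Reeb direction, and says nothing a priori about $R(X,Y)Z$ for $X,Y,Z$ horizontal. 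That the full curvature tensor (Riemannian or Tanaka--Webster) is nonetheless a universal expression in $g,\eta,\varphi,h$ with coefficients depending only on $k$ and $\mu$ is a theorem, proved by a nontrivial computation using the second Bianchi identity and the explicit formulas for $\nabla h$ and $\nabla\varphi$ on a $(k,\mu)$-space; this computation is precisely the core of Boeckx's paper (and of the earlier work of Blair--Koufogiorgos--Papantoniou), and without it you cannot exclude a hidden modulus in the infinitesimal model. Once that formula is in hand, the rest of your plan is sound: the Boeckx--Cho condition $\tilde\nabla R=\tilde\nabla\tau=0$ for the Tanaka--Webster connection is exactly the Ambrose--Singer hypothesis for a metric connection with parallel torsion and curvature, so isomorphic infinitesimal models (matching also the parallel tensors $g,\eta,\varphi$) yield a local contact metric isometry, and completeness plus simple connectedness globalize it by the usual monodromy argument.
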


\medskip

Finally we recall how the Boeckx invariant $I_M$ of a non-Sasakian $(k,\mu)$ manifold is linked with the behavior of the Pang invariants of the Legendre foliations determined by $\mathcal{D}(\lambda)$ and $\mathcal{D}(-\lambda)$.
The Pang invariant of a Legendre foliation $\mathcal{F}$ on a contact manifold $(M,\eta)$ is the symmetric tensor:
\begin{equation}\label{Pang inv def}
 \varPi_{\mathcal{F}}(X,Y):=-(\mathcal{L}_X\mathcal{L}_Y\eta)(\xi)=2\di \eta([\xi,X],Y),
\end{equation}
where $X,Y$ are vectors fields tangent to $\mathcal{F}$ (cf. \cite{Pang}).
The Legendre foliation $\mathcal{F}$ is called positive, negative or flat according to the circumstance that the bilinear form $\varPi_{\mathcal{F}}$ is positive definite, negative definite or vanishes identically, respectively.

In our case the explicit expressions of $\varPi_{\mathcal{D}(\lambda)}$ and $\varPi_{\mathcal{D}(-\lambda)}$ are (see \cite[p. 127]{Blair2010Book}
or \cite{MinoDiTerlizzi}):
\begin{equation}\label{Pang inv 1}
  \varPi_{\mathcal{D}(\lambda)}=\frac{(\lambda+1)^2-k-\mu \lambda}{\lambda}g_{\eta}|_{\mathcal{D}(\lambda) \times \mathcal{D}(\lambda)},
  \end{equation}
  \begin{equation}\label{Pang inv 2}
 \varPi_{\mathcal{D}(-\lambda)}=\frac{-(\lambda-1)^2+k-\mu \lambda}{\lambda}g_{\eta}|_{\mathcal{D}(-\lambda) \times \mathcal{D}(-\lambda)}.
\end{equation}
 Using the previous equations one gets (see \cite{Mino}):
 \begin{theorem*}
  Let $(M,\varphi, \xi,\eta,g)$ be a non-Sasakian contact metric $(k,\mu)$ manifold. Then one of the following must hold:
  \begin{enumerate}[(a)]
   \item both $\mathcal{D}(\lambda)$ and $\mathcal{D}(-\lambda)$ are positive definite;
   \item $\mathcal{D}(\lambda)$ is positive definite and $\mathcal{D}(-\lambda)$ is negative defined;
   \item both $\mathcal{D}(\lambda)$ and $\mathcal{D}(-\lambda)$ are negative definite;
   \item $\mathcal{D}(\lambda)$ is positive definite and $\mathcal{D}(-\lambda)$ is flat;
   \item $\mathcal{D}(\lambda)$ is flat and $\mathcal{D}(-\lambda)$ is negative defined.
  \end{enumerate}
  Furthermore, $M$ belongs to the class (a), (b), (c), (d), (e) if and only if $I_M>1$, $-1<I_M<-1$, $I_M<-1$, $I_M=1$, $I_M=-1$, respectively.
 \end{theorem*}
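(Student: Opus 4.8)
The plan is to read off the definiteness type of each Pang tensor directly from the explicit coefficients in \eqref{Pang inv 1} and \eqref{Pang inv 2}, after re-expressing them through the single scalar $I_M$. Since $M$ is non-Sasakian, the first recalled theorem gives $k<1$, hence $\lambda=\sqrt{1-k}>0$. In particular $\lambda$ has a fixed sign, and because the ambient metric is Riemannian the restriction $g_\eta|_{\mathcal{D}(\pm\lambda)}$ is positive definite; therefore the definiteness of each $\varPi_{\mathcal{D}(\pm\lambda)}$ is governed entirely by the sign of the scalar coefficient in front of $g_\eta$.

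First I would eliminate $k$ in favour of $\lambda$ via $k=1-\lambda^{2}$. For the numerator of the coefficient in \eqref{Pang inv 1} a short computation gives
\[
(\lambda+1)^{2}-k-\mu\lambda=2\lambda^{2}+(2-\mu)\lambda=\lambda\bigl(2\lambda+2-\mu\bigr),
\]
so the coefficient of $\varPi_{\mathcal{D}(\lambda)}$ simplifies to $2\lambda+2-\mu$. Using $I_M=\dfrac{1-\mu/2}{\lambda}$, that is $2-\mu=2\lambda I_M$, this coefficient becomes $2\lambda(1+I_M)$. The same substitution in the numerator of \eqref{Pang inv 2} yields
\[
-(\lambda-1)^{2}+k-\mu\lambda=-2\lambda^{2}+(2-\mu)\lambda=\lambda\bigl(-2\lambda+2-\mu\bigr),
\]
whence the coefficient of $\varPi_{\mathcal{D}(-\lambda)}$ equals $-2\lambda+2-\mu=2\lambda(I_M-1)$.

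Since $\lambda>0$, the definiteness type of $\varPi_{\mathcal{D}(\lambda)}$ is exactly the sign of $1+I_M$, and that of $\varPi_{\mathcal{D}(-\lambda)}$ is the sign of $I_M-1$. A simple case analysis on the position of $I_M$ relative to $\pm1$ then produces the five listed configurations: $I_M>1$ gives (a), $I_M=1$ gives (d), $-1<I_M<1$ gives (b), $I_M=-1$ gives (e), and $I_M<-1$ gives (c). These equivalences simultaneously rule out the remaining sign patterns, so that the five cases are exhaustive and mutually exclusive; for instance, "$\mathcal{D}(\lambda)$ negative and $\mathcal{D}(-\lambda)$ positive" would force $I_M<-1$ and $I_M>1$ at once, and "both flat" would force $I_M=-1=1$, both impossible.

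There is no genuine obstacle beyond this bookkeeping: the whole statement collapses to the two elementary identities above together with $\lambda>0$. The only point deserving care is the tacit use of the positive definiteness of $g_\eta|_{\mathcal{D}(\pm\lambda)}$, which is what allows the scalar coefficient alone to dictate the definiteness of the Pang tensor; this, and the strict positivity of $\lambda$, is exactly where the non-Sasakian hypothesis enters.
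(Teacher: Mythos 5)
Your computation is correct and is exactly the argument the paper intends: it states the result with the remark ``Using the previous equations one gets (see \cite{Mino})'', and your reduction of the coefficients in \eqref{Pang inv 1} and \eqref{Pang inv 2} to $2\lambda(1+I_M)$ and $2\lambda(I_M-1)$ via $k=1-\lambda^2$ and $2-\mu=2\lambda I_M$, together with $\lambda>0$ and the positive definiteness of $g_\eta$ on $\mathcal{D}(\pm\lambda)$, is precisely that computation. Note only that case (b) in the statement contains a typo ($-1<I_M<-1$ should read $-1<I_M<1$), which your case analysis correctly resolves.
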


\subsection{Locally symmetric pseudo-Hermitian manifolds}\label{Sec CR}
Let $M^{n+k}$ be a smooth manifold. A \emph{partial complex structure} of $CR$-codimension $k$ is a pair $(HM,J)$ where $HM$ is a smooth real subbundle of real dimension $2n$ of the tangent bundle $TM$, and $J$ is a smooth bundle isomorphism $J:HM\rightarrow HM$ such that $J^2=-I$. 

An \emph{almost $CR$ structure} on $M$ is a partial complex structure $(HM,J)$ on $M$ satisfying:
\begin{equation*}
 [X,Y]-[JX,JY]\in \h,
\end{equation*}
for every $X,Y\in \h$, where \h  denotes the module of all the smooth sections of $HM$.
If moreover the following equation 
\begin{equation}\label{integrability condition}
 [JX,JY]-[X,Y]-J([JX,Y]+[X,JY])=0,
\end{equation}
holds for every $X,Y\in \h$, then $(HM,J)$ is said to be a \emph{$CR$ structure} on $M$, and $(M, HM,J)$ is a \emph{$CR$ manifold}.

If $(HM,J)$ is an almost $CR$ structure of $CR$-codimension $1$ on a orientable manifold $M$, 
one can also represent $HM$ as
$$
HM=\ker (\eta),
$$
where $\eta$ is a globally defined nowhere vanishing one form.
If, for some choice of $\eta$, the corresponding \emph{Levi form} 
\begin{equation*}
 L_{\eta}(X,Y):=-\di \eta (X,JY), \quad X,Y\in \h
\end{equation*}
is positive definite, then the almost $CR$ structure  $(HM,J)$ is a called 
\emph{strongly pseudo-convex},  and we shall refer to $(M,HM,J,\eta)$ as a \emph{strongly pseudo-convex almost $CR$ manifold}. When 
the underlying almost $CR$ structure is also integrable, $M$ is usually termed a 
{\em pseudo-Hermitian manifold}. See for example \cite{LibroDragomirTomassini} for more information on this subject.

\smallskip
It is well known that any strongly pseudo-convex structure $(HM,J,\eta)$ on $M$ determines a contact metric structure $(\varphi, \xi,\eta,g_{\eta})$.
Indeed, $\eta$ is a contact form, so that there exists a unique nowhere vanishing globally defined vector field $\xi$ transverse to $HM$ (the Reeb
vector field), such that
\begin{equation*}
 \eta(\xi)=1, \quad \di \eta (\xi, X)=0
\end{equation*}
for every $X\in \mathfrak{X}(M)$. The Levi form $L_{\eta}$ and the bundle isomorphism $J$ can be canonically extended respectively to a Riemannian metric $g_{\eta}$, called the \emph{Webster metric}, and to a $(1,1)$-tensor field $\varphi$:
\begin{equation*}
 g_{\eta}(X,Y):=L_{\eta}(X,Y), \quad g_{\eta}(X,\xi)=0,\quad g_{\eta}(\xi, \xi)=1,
\end{equation*}
\begin{equation*}
 \varphi X:=JX,\quad \varphi \xi :=0,
\end{equation*}
where $X,Y\in \h$. One can check that $(\varphi, \xi,\eta,g_{\eta})$ is a contact metric structure on $M$ in the sense of \cite{Blair2010Book}. 

Conversely, if $(\varphi, \xi,\eta,g)$ is a contact metric structure on $M$, then setting
\begin{equation*}
 HM:=\text{Im}(\varphi), \quad J:=\varphi|_{HM},
\end{equation*}
one gets  a  strongly pseudo-convex almost $CR$ structure,  whose Webster metric $g_{\eta}$ coincides with $g$.

\medskip

Let $(M,HM,J,\eta)$ be a strongly pseudo-convex almost $CR$ manifold.
A \emph{$CR$ symmetry} at a point $p\in M$ is a $CR$ diffeomorphism
\begin{equation*}
 \sigma:M\rightarrow M,
\end{equation*}
which is also an isometry with respect to the Webster metric $g_{\eta}$, and such that 
\begin{equation*}
 (\di \sigma )_p|_{H_pM}=-Id_{H_pM}.
\end{equation*}
If $M$ admits a $CR$ symmetry at $p$ for every $p\in M$, then $M$ will be called a {\em symmetric pseudo-Hermitian manifold},
according to the general terminology introduced by Kaup and Zaitsev in  \cite{Kaup Zaitsev}.
Since the symmetry at $p$ in uniquely determined (cf. Theorem 3.3 in \cite{Kaup Zaitsev}), it makes sense also
to define {\em locally symmetric} pseudo-Hermitian manifolds in a natural manner. Observe that, since the  local $CR$ symmetries are $CR$ maps, 
for these manifolds the integrability condition \eqref{integrability condition} is automatically satisfied.

Finally, we recall that it was showed in \cite[Theorem 3.2]{DileoLotta} that a non Sasakian contact metric manifold satisfies the $(k,\mu)$ condition \eqref{kmu}
if and only if it is a locally symmetric pseudo-Hermtian manifold.

\subsection{Tangent bundles and tangent hyperquadric bundles}\label{sec tangent bundle}
Here we recall some notions and properties on the tangent bundle of a manifold. The definition and some properties of the tangent hyperquadric bundle of a Lorentzian manifold will be also recalled.

 Let $M$ be a smooth manifold. The \emph{vertical lift} $X^V$ of a vector field $X$ on $M$, is the vector field on $TM$ defined by
 \begin{equation*}
  X^V \omega =\omega(X)\circ \pi,
 \end{equation*}
 where $\omega$ is any $1$-form on $M$ and $\pi:TM \rightarrow M$ is the canonical projection. Furthermore, if $D$ is an affine connection on $M$, the \emph{horizontal lift} of $X$ with respect to $D$, is defined by
 \begin{equation*}
  X^H \omega = D_X\omega,
 \end{equation*}
where $\omega$ is any $1$-form on $M$. The local expression of $X^H$ with respect to the local coordinates system $(q^i, v^i)$ on $TM$ associated to a local system of coordinates $(x^i)$ on $M$ is: 
\begin{equation}\label{local expression X^H}
 X^H=X^i \frac{\partial}{\partial q^i}-X^i v^j \Gamma^k_{i j}\frac{\partial}{\partial v^k}.
\end{equation}
We denote by $H_t$ and $V_t$ the span of the horizontal and vertical lifts at $t\in TM$  respectively. We have that:
\begin{equation*}
 T_t(TM)= H_t \oplus V_t.
\end{equation*}
The \emph{canonical vertical vector field} $\mathcal{N}$ on $TM$ and the \emph{geodesic flow} $\zeta$ on $TM$ are defined by:
\begin{equation*}
 \mathcal{N}_{t}=u^V_{t}, \quad \zeta_{t}=u^H_{t},\quad  t=(p,u)\in TM.
\end{equation*}

The tangent bundle of an affine manifold $(M,D)$ admits a canonical almost complex structure $\tilde{J}:TTM\rightarrow TTM$ such that:
\begin{equation*}
 \tilde{J} X^H=X^V, \quad \tilde{J}X^V=-X^H,\quad X\in \mathfrak{X} (M).
\end{equation*}

Observe that: 
$$\tilde{J} \mathcal{N}=-\zeta, \quad \tilde{J} \zeta=\mathcal{N}.$$

For the Lie brackets between horizontal and vertical lifts of the vector fields $X, Y$ on $M$, the following formulas hold (see \cite{Blair2010Book}):
\begin{equation*}
  [X^V,Y^V]=0, \quad [X^H, Y^V]=(D_XY)^V,
\end{equation*}
\begin{equation}\label{brackets}
   [X^H,Y^H]_t=[X,Y]^H_t-(R(X,Y)u)_t^H, 
\end{equation}
where $R$ denotes the curvature tensor of $D$ on $M$.

\medskip
In all that follows, we consider a Lorentzian manifold $(M,g)$. The \emph{Sasaki metric} $\tilde{G}$ on $TM$ is defined according to:
\begin{equation*}
\tilde{G}_t(X^H,Y^H)=g_p(X_p,Y_p),\quad \tilde{G}_t(X^V,Y^V)=g_p(X_p,Y_p), \quad \tilde{G}_t(X^H,Y^V)=0,
\end{equation*}
where $X,Y \in\mathfrak{X}(M)$, $t=(p,u)\in TM$, and $X^H$, $Y^H$ are the horizontal lifts of $X,Y$ with respect to the Levi-Civita connection of $g$. Observe that the Sasaki metric $\tilde{G}$ has index $2$ (see \cite{Perrone Dragomir} for more details). 
It is known that the $1$-form on $TM$:
\begin{equation*}
\begin{aligned}
 \beta_t(\tilde{X}_t):=\tilde{G}_t(\tilde{X}_t,u_t^H) =g_p(\pi_{\star}\tilde{X}, u), \quad \tilde{X}\in \mathfrak{X}(TM), \; t=(p,u)\in TM,\\        
\end{aligned}                     
\end{equation*}
 satisfies
\begin{equation}\label{beta}
 2 \di \beta (\tilde{X},\tilde{Y})=\tilde{G}(\tilde{X},\tilde{J}\tilde{Y}),
\end{equation}
for every $\tilde{X},\tilde{Y} \in \mathfrak{X}(TM)$  (see for instance \cite[p. 171]{Blair2010Book} or \cite{Perrone Dragomir}), so that 
$(TM,\tilde{J},\tilde{G})$ is an indefinite almost K\"ahler manifold.

\medskip 

Now we consider the tangent hyperquadric bundle
 \begin{equation*}
  T_{-1}M := \{(p,u)\in TM \; |\; g_p(u,u)=-1\},
 \end{equation*}
 which is an orientable hypersurface of $TM$, being $\mathcal{N}$ a unit normal vector field to $T_{-1}M$.
 We have that:
 $$
 \tilde{G}_t(\N_t, \N_t)=-1, \quad H_t\subset T_t(T_{-1}M),$$
 \begin{equation*}
 T_t(T_{-1}M)=\{X^H_t+Y^V_t \;|\; X,Y\in T_pM, \; g_p(Y,u)=0 \},
\end{equation*}
 for every $t=(p,u)\in T_{-1}M$.
 Being a hypersurface of $(TM,\tilde{J})$, $T_{-1}M$ inherits a canonical partial complex structure $(H(T_{-1}M),J)$, where
\begin{equation*}
 H(T_{-1}M):=\{X\in T(T_{-1}M) \;|\; \tilde{J}X\in T(T_{-1}M) \},
\end{equation*}
and 
$$
J:H(T_{-1}M)\rightarrow H(T_{-1}M),
$$
is the restriction of the almost complex structure $\tilde{J}$. 
Observe that for every $t=(p,u)\in T_{-1}M$:
\begin{equation*}
\begin{aligned}
 H_t(T_{-1}M)=&\{X^H_t+Y^V_t \;|\; X,Y\in T_pM, \; g_p(X,u)=0, \; g_p(Y,u)=0\}\\
             =&\{X^O_t+Y^T_t \;|\; X,Y\in T_pM \}
 \end{aligned}
\end{equation*}
where, for every $X\in\mathfrak{X}(M)$, we introduce the following vector fields tangent
to $T_{-1}M$:
$$
X^O:=X^H+\tilde{G}(X^V,\N )\zeta,
$$
$$
X^T:=X^V + \tilde{G}(X^V,\N )\N.
$$

\medskip

Finally, we consider the $1$-form $\eta:= \frac{1}{2} \beta$ on $T_{-1}M$, whose kernel
is $H(T_{-1}M)$. 
Equation \eqref{beta} implies that the Levi form $L_\eta$ 
 is positive definite and the Reeb vector field $\xi$ of $\eta$ is 
 \begin{equation}
 \label{Reeb}
 \xi_t=-2 \zeta_t, \quad t\in T_{-1}M.
 \end{equation}
Hence $(H(T_{-1}M),J, \eta)$ is a strongly pseudo-convex almost $CR$ structure on $T_{-1}M$, that we shall call the \emph{standard pseudo-convex structure}.
 The associated contact metric structure, also named \emph{standard contact metric structure}, is determined according to:
  \begin{equation*}
 \begin{aligned}
 &\varphi (X^O)=X^T, \quad \varphi (X^T)=-X^O, \quad  \varphi (\xi)=0,\\
 &g_{\eta}(\tilde{X}, \tilde{Y})=\frac{1}{4}\tilde{G}(\tilde{X}, \tilde{Y}), \quad g_{\eta}(\tilde{X}, \xi)=0,\quad g_{\eta}(\xi,\xi)=1,
 \end{aligned}
 \end{equation*}
 where  $X\in \mathfrak{X}(M)$ and $\tilde{X}, \tilde{Y}$ are any smooth sections of $H(T_{-1}M)$.

 \section{Contact metric $(k,\mu)$ structures on tangent hyperquadric bundles}
 
 In this section we prove our main results.
 
 \begin{theorem}\label{1}
  Let $(M,g)$ be a Lorentzian manifold.  Then $T_{-1}M$ is a locally symmetric pseudo-Hermitian manifold if and only if $(M,g)$ has constant sectional curvature.
 \end{theorem}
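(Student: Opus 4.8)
The plan is to prove the two implications by different methods: the \emph{if} direction geometrically, by lifting isometries of the base, and the \emph{only if} direction by computing the induced structure tensors on $T_{-1}M$ and invoking the characterization of locally symmetric pseudo-Hermitian manifolds recalled above.

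For the \emph{if} direction, assume $(M,g)$ has constant sectional curvature; then $M$ is locally symmetric, and by Cartan's theorem every linear isometry of a tangent space $T_pM$ preserving the (constant-curvature) curvature tensor extends to a local isometry of $M$ fixing $p$ — and for constant curvature \emph{every} linear isometry preserves $R$. Fix $t=(p,u)\in T_{-1}M$ and let $r_u\colon T_pM\to T_pM$ be the reflection fixing $u$ and acting as $-\mathrm{Id}$ on $u^{\perp}$; since $u$ is timelike and $u^{\perp}$ spacelike, $r_u$ is a linear isometry. Choose a local isometry $f$ with $f(p)=p$ and $(df)_p=r_u$, and set $\sigma:=f_{*}\colon TM\to TM$. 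The differential of an isometry preserves the canonical almost complex structure $\tilde J$, the Sasaki metric $\tilde G$ and the one-form $\beta$, hence preserves $T_{-1}M$, the horizontal/vertical splitting, the partial complex structure $(H(T_{-1}M),J)$, the Webster metric $g_{\eta}=\tfrac14\tilde G$ and the contact form $\eta=\tfrac12\beta$. Since $(df)_pu=u$ we get $\sigma(t)=t$, and since $H_t(T_{-1}M)$ is spanned by the horizontal and vertical lifts of vectors in $u^{\perp}$, on which $(df)_p=r_u=-\mathrm{Id}$, we obtain $(d\sigma)_t|_{H_t}=-\mathrm{Id}$. Thus $\sigma$ is a CR symmetry at the arbitrary point $t$; by the remark preceding this section the integrability condition \eqref{integrability condition} is then automatic, so $T_{-1}M$ is a locally symmetric pseudo-Hermitian manifold.

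For the \emph{only if} direction, assume $T_{-1}M$ is locally symmetric pseudo-Hermitian. By the cited theorem of Dileo--Lotta its standard contact metric structure is Sasakian or satisfies the $(k,\mu)$ condition \eqref{kmu}. I would first compute the operator $h=\tfrac12\mathcal{L}_{\xi}\varphi$ explicitly in terms of lifts, using $\xi=-2\zeta$ from \eqref{Reeb} together with the bracket formulas \eqref{brackets}; the curvature $R$ of $(M,g)$ enters through the Jacobi operator $X\mapsto R(X,u)u$ at $t=(p,u)$. In either case the structure forces $h^{2}$ to equal a constant multiple of $-\varphi^{2}$ on $H(T_{-1}M)$ (with the constant being $0$ in the Sasakian case). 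Translating this back through the explicit form of $h$, the Jacobi operator $R(\cdot,u)u$ must act as one and the same scalar on all spacelike directions orthogonal to $u$, for every $t=(p,u)\in T_{-1}M$; since $k,\mu$ are global constants, this scalar is independent of $t$. Letting $u$ range over all timelike unit vectors then forces $R(X,Y)Z=c\,(g(Y,Z)X-g(X,Z)Y)$, i.e.\ constant sectional curvature, the parallelism carried by \eqref{kmu} being consistent with the resulting $\nabla R=0$.

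The main obstacle is the backward computation. Unlike the Riemannian tangent sphere bundle, here the Webster metric $g_{\eta}$ is \emph{not} a scalar multiple of the (Lorentzian, index $2$) Sasaki metric $\tilde G$: along $\xi$ the two even differ in sign, so $g_{\eta}$ is obtained from $\tilde G$ by a rescaling together with a sign flip in the $\xi$-direction. Consequently the Levi-Civita connection of $g_{\eta}$, the operator $h$, and the curvature cannot be read off from the standard tangent-sphere-bundle formulas and must be recomputed with this semi-Riemannian correction in place, carefully tracking the components along $\mathcal{N}$ and the correction terms in the fields $X^{O},X^{T}$. Extracting from the $(k,\mu)$ identity the precise conclusion that the sectional curvature is \emph{constant} — rather than merely that $(M,g)$ is curvature-homogeneous or locally symmetric — is the technical core of the argument.
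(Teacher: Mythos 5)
Your forward implication is sound and is essentially the paper's own argument: your reflection $r_u$ is exactly the map $L(X)=-X-2g_p(u,X)u$ used there, and the rest (extend $L$ to a local isometry $f$ by Cartan's theorem, lift to $F=\di f$, check that $F$ preserves horizontal and vertical lifts, $\tilde J$, $\tilde G$, $\beta$ and hence $\xi$, and acts as $-\mathrm{Id}$ on $H_t(T_{-1}M)$) matches the paper step for step. No issue there.

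The converse is where the proposal breaks down, in two ways. First, it is only a plan: you explicitly defer ``the technical core'' (the computation of $h$ and the extraction of constant curvature), so nothing is actually proved. Second, the one inference you do commit to is invalid. From the $(k,\mu)$ condition one gets $h^2=(k-1)\varphi^2$, and the bracket formulas \eqref{brackets} and \eqref{Reeb} show that on vertical lifts $h$ acts (under the natural identification with $u^{\perp}$) as $X\mapsto R(X,u)u-X$; hence $h^2=(1-k)\,\mathrm{Id}$ only yields $(R_u-\mathrm{Id})^2=(1-k)\,\mathrm{Id}$ for the Jacobi operator $R_u=R(\cdot,u)u$ on $u^{\perp}$, i.e.\ that $R_u$ has at most the two eigenvalues $1\pm\sqrt{1-k}$ --- \emph{not} that it is a single scalar. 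Since $\varphi$ swaps the horizontal and vertical subbundles and $h$ anticommutes with $\varphi$, the eigendistributions $\mathcal{D}(\pm\lambda)$ are free to mix horizontal and vertical directions, so nothing at this stage forces $R_u$ to be scalar; closing the gap would require the full identity \eqref{kmu}, not just $h^2$. The paper avoids all of this: a locally symmetric pseudo-Hermitian manifold automatically has an \emph{integrable} CR structure (the symmetries are CR maps, so \eqref{integrability condition} holds), and Perrone's theorem states that the standard almost CR structure of $T_{-1}M$ is integrable if and only if $(M,g)$ has constant sectional curvature. That one observation replaces your entire backward computation, and you never exploit integrability at all.
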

 
 \begin{proof}
 Suppose first that $(M,g)$ has constant sectional curvature. Let $t=(p,u)$ any point on $T_{-1}M$. We have that the linear mapping
 $$
 L:X\in T_pM\mapsto -X-2g_p(u,X)u\in T_pM,
 $$
 is an orthogonal transformation that preserves the Riemannian curvature tensor. Thus, there exists an isometry
 $$
 f:U\rightarrow U,
 $$
 where $U$ is an open neighborhood of $p$, such that $\di_p f=L$ (cf. \cite[Chapter 8]{ONeill}). Since $f$ is an isometry, we see that the induced diffeomorphism 
 $$
 F=\di f:TU\rightarrow TU
 $$
 satisfies:
 \begin{equation}
 \label{F-lifts}
  (\di F)_s(X^H_s)=(\di f_x(X))^H_{F(s)}, \quad (\di F)_s(X^V_s)=(\di f_x(X))^V_{F(s)},
 \end{equation}
 for every $X\in \mathfrak{X}(M)$ and $s=(x,v)\in T_{-1}M\cap TU$, hence $F$ is a local isometry with respect to the Sasaki metric $\tilde{G}$ on $TM$, preserving the almost complex structure $\tilde{J}$.  It follows that $F$ restricts to a local $CR$ diffeomorphism of $T_{-1}M$. Moreover,  \eqref{F-lifts} and
 \eqref{Reeb} imply that
 $(\di F)_s (\xi_s)=\xi_{F(s)}$,
yielding that $F$ is also a local isometry with respect to the Webster metric. Moreover, being $\di_p f=L$, we have:
\begin{equation*}
 (\di F)_t|_{H_t(T_{-1}M)}=-Id,
\end{equation*}
and thus $F$ is a local $CR$ symmetry at $t$.

Viceversa, if  $T_{-1}M$ is a locally symmetric pseudo-Hermitian manifold,  then in particular $(H(T_{-1}M),J)$ is a $CR$ structure and hence, by \cite[Theorem~1] {Perrone}, $(M,g)$ has constant sectional curvature.
 \end{proof}

 Now we determine the Boeckx invariant of $T_{-1}M$, where $M$ is a Lorentzian space form.
 
 \begin{theorem}
  Let $(M^{n+1},g)$ be a Lorentzian manifold with constant sectional curvature $c$. 
  Then, $T_{-1}M$ endowed with the standard contact metric structure is Sasakian if and only if $c=-1$.
  If $c\neq -1$ then $T_{-1}M$ is a non-Sasakian contact metric $(k,\mu)$-space, whose Boeckx invariant is:
  \begin{equation*}
   I=\frac{c-1}{|c+1|}.
  \end{equation*}

\end{theorem}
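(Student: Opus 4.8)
The plan is to exploit Theorem~\ref{1}: since $(M,g)$ has constant sectional curvature, $T_{-1}M$ is a locally symmetric pseudo-Hermitian manifold, so its underlying almost $CR$ structure is integrable. By the Dileo--Lotta characterization recalled above, it then suffices to decide when the structure is Sasakian and, in the non-Sasakian case, to identify the constants $k$ and $\mu$; the invariant is then $I=\frac{1-\frac{\mu}{2}}{\sqrt{1-k}}$. The whole computation runs parallel to the classical one for the tangent sphere bundle $T_1M$ of a Riemannian space form (see \cite{Blair2010Book}), with one decisive difference: here the unit normal $\mathcal{N}$ to $T_{-1}M$ in $(TM,\tilde G)$ is \emph{timelike}, $\tilde G(\mathcal{N},\mathcal{N})=-1$, and the Sasaki metric has index $2$.

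First I would compute the structure tensor $h=\tfrac12\mathcal{L}_\xi\varphi$. Fix $t=(p,u)\in T_{-1}M$ and a vector field $X$ on $M$ with $\nabla X=0$ at $p$ and $g(X,u)=0$, so that the fields $X^O$ and $X^T$ reduce at $t$ to $X^H$ and $X^V$ (one checks that the extra $\zeta$- and $\mathcal{N}$-terms vanish at $t$). Writing $\xi=-2\zeta$ and using the geodesic-flow brackets $[\zeta,X^V]_t=-X^H_t+(\nabla_uX)^V_t$ and $[\zeta,X^H]_t=(\nabla_uX)^H_t-(R(u,X)u)^V_t$, together with $\varphi X^O=X^T$ and $\varphi X^T=-X^O$, collapses $h$ to the single term $R(u,X)u$. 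Here the timelike normal enters: for $X\perp u$ constant curvature gives
\[
R(u,X)u=c\big(g(X,u)u-g(u,u)X\big)=cX,
\]
the opposite sign to the Riemannian case (where $g(u,u)=+1$ yields $-cX$). Carrying this through gives $hX^O=(1+c)X^O$ and $hX^T=-(1+c)X^T$, hence $\lambda=\sqrt{1-k}=|1+c|$ and $k=1-(1+c)^2=-c(c+2)$. In particular $h=0$ exactly when $c=-1$; since the $CR$ structure is integrable, a $K$-contact (i.e.\ $h=0$) structure with integrable $CR$ structure is Sasakian, whereas $c\neq-1$ gives $h\neq0$, i.e.\ a non-Sasakian structure. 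This settles the Sasakian dichotomy.

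For $c\neq-1$ it remains to find $\mu$. Rather than computing the full Webster curvature, I would use the Pang invariants \eqref{Pang inv 1}--\eqref{Pang inv 2}, available once the space is known to be $(k,\mu)$. With $[\xi,X^O]_t=2(R(u,X)u)^V_t=2cX^T_t$ and $\di\eta=\tfrac14\tilde G(\,\cdot\,,\tilde J\,\cdot\,)$ (from \eqref{beta} and $\eta=\tfrac12\beta$) one computes
\[
\varPi(X^O,X^O)=2\,\di\eta([\xi,X^O],X^O)=4c\,\di\eta(X^T,X^O)=c\,g(X,X)=4c\,g_\eta(X^O,X^O).
\]
Comparing with \eqref{Pang inv 1} or \eqref{Pang inv 2} (according to the sign of $1+c$, which decides whether $X^O$ spans $\mathcal{D}(\lambda)$ or $\mathcal{D}(-\lambda)$) and solving for $\mu$ gives, in either case, $\mu=4-2c$. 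Therefore
\[
I=\frac{1-\frac{\mu}{2}}{\sqrt{1-k}}=\frac{1-(2-c)}{|1+c|}=\frac{c-1}{|c+1|}.
\]

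The main obstacle is the bracket and curvature bookkeeping on $T_{-1}M$ viewed as a hypersurface of the index-$2$ manifold $(TM,\tilde G)$: one must correctly project the ambient geodesic-flow brackets onto $T(T_{-1}M)$ through the $X^O,X^T$ decomposition and, above all, keep track of the sign produced by the timelike normal $\mathcal{N}$. That single sign flip in $R(u,X)u$ is precisely what converts the Riemannian output $\lambda=|1-c|$, $I=\frac{1+c}{|1-c|}$ into the Lorentzian $\lambda=|1+c|$, $I=\frac{c-1}{|c+1|}$; every other step mirrors the classical tangent sphere bundle computation.
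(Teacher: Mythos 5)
Your proposal is correct and follows essentially the same route as the paper: invoke Theorem~\ref{1} to get the $(k,\mu)$ condition, compute the eigenvalues $\pm(c+1)$ of $h$ from the brackets of $\xi=-2\zeta$ with horizontal and vertical lifts (whence the Sasakian case $c=-1$), and then extract $\mu=4-2c$ by comparing the Pang invariant of $\mathcal{D}(c+1)$ with formulas \eqref{Pang inv 1}--\eqref{Pang inv 2} according to the sign of $c+1$. The only differences are cosmetic (you normalize $\nabla X=0$ at $p$ where the paper lets the $(\nabla_uX)$-terms cancel by orthogonality).
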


\begin{proof} Theorem~\ref{1} ensures that the standard contact metric structure $(\varphi,\xi,\eta,g_{\eta})$ of $T_{-1}M$ is a
contact metric $(k,\mu)$ structure (eventally a Sasakian one). 
In the following we compute the spectrum of  the symmetric operator $h$. Let $t=(p,u)\in T_{-1}M$ and $X\in T_pM$ such that $g_p(X,u)=0$. Then:
 \begin{equation}\label{h}
  \begin{aligned}
   2h(X^T)&=[\xi,\varphi X^T]-\varphi[\xi, X^T]\\
          &=-[\xi, X^O]-\tilde{J}[\xi, X^V+G(X^V,\N)\N],
  \end{aligned}
 \end{equation}
 where we are denoting again with $X$ any extension of the vector $X$. 
 Let $(x^i)$ be a local coordinate system on $M$ and $(q^i,v^i)$ the corresponding local coordinate system on $TM$. Since locally 
 $$
 \xi=-2 v^i (\frac{\partial}{\partial x^i})^H,
 $$
then using equation \eqref{brackets}, by a standard
computation (cf. also \cite{DileoLotta} in the Riemannian case),  we obtain:
 \begin{equation}\label{[xi,X]}
  [\xi, X^V]_t=2(X^H_t-(\nabla_uX)^V_t), \quad [\xi, X^H]_t=-2((\nabla_uX)^H_t-cX^V_t),
 \end{equation}
and hence \eqref{h} becomes:
 \begin{equation*}
  \begin{aligned}
   2h(X^T_t)&=2((\nabla_uX)_t^H-cX^V_t)-2(X^V_t+(\nabla_uX)_t^H)-\xi_t(G(X^V,\N))\tilde{J}\N_t\\
            &=-2(c+1)X^V_t-\frac{1}{4}\xi_t(G(X^V,\N))\xi_t.
  \end{aligned}
 \end{equation*}
It follows that $ \xi_t(G(X^V,\N))=0$, thus
\begin{equation}\label{eigenvalues}
\begin{aligned}
 &h(X^V_t)=-(c+1)X^V_t,\\
 &h(X^H_t)=h(-\varphi X^V_t)=\varphi h X^V_t=(c+1)X^H_t,
 \end{aligned}
\end{equation}
and the spectrum of the operator $h$ is $\{0, c+1, -(c+1)\}$. It follows that $T_{-1}M$ is Sasakian iff $c=-1$.

\medskip

Suppose $c\neq -1$. 
Let $t=(p,u)$ and $X\in T_pM$ such that $g_p(X,u)=0$. Then using the definition  \eqref{Pang inv def} of the Pang invariant and equation \eqref{[xi,X]},
we get, being $X^O$ a global section of $\mathcal{D}(c+1)$:
\begin{equation}\label{3}
\begin{aligned}
 \varPi_{\mathcal{D}(c+1)}(X^O_t,X^O_t) &= 2\di\eta([\xi,X^O]_t,X^O_t) \\
                                        &= 2g_{\eta}([\xi, X^H+\tilde{G}(X^V,\mathcal{N})\zeta]_t, X^T_t)\\
                                        &= 2g_{\eta}([\xi, X^H]_t+\xi (\tilde{G}(X^V,\mathcal{N}))\zeta_t,  X^T_t)\\
                                        &= 2g_{\eta}([\xi, X^H]_t, X^T_t)\\
                                        &= 4c g_{\eta}(X^V_t, X^T_t)\\
                                        &= 4cg_{\eta}(X^T_t, X^T_t).
 \end{aligned}
\end{equation}

\medskip

In particular if $c> -1$,
by equation \eqref{Pang inv 1}, we obtain
\begin{equation}\label{4}
 \varPi_{\mathcal{D}(c+1)}(X^O,X^O) = (2c+4-\mu)g_{\eta}(X^O,X^O).
\end{equation}
Thus comparing \eqref{3} and \eqref{4} we have that $\mu=4-2c$ and hence
$$
I_M=\frac{c-1}{c+1}.
$$
Finally suppose $c<-1$. Then by \eqref{Pang inv 2}:
\begin{equation}\label{5}
 \varPi_{\mathcal{D}(c+1)}(X^O,X^O) = (2c+4-\mu)g_{\eta}(X^O,X^O).
\end{equation}
Comparing equations \eqref{5} and \eqref{3} we obtain that $\mu=4-2c$ and 
$$
I_M=-\frac{c-1}{c+1}.
$$

\end{proof}

 \begin{corollary}
  Every non Sasakian contact metric $(k,\mu)$ space with Boeckx invariant $I\leqslant -1$ is locally equivalent, up to a $\mathcal{D}$-homothetic deformation, to the tangent hyperquadric bundle $T_{-1}M$ of a Lorentzian manifold $M$ with constant sectional curvature $c\le 0$, $c\not =-1$, endowed with its standard contact metrics structure.
 \end{corollary}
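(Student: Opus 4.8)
The plan is to deduce the corollary from Boeckx's classification theorem quoted above, by realizing the prescribed value of the invariant through a suitable tangent hyperquadric bundle. So let $N$ be a non-Sasakian contact metric $(k,\mu)$-space of dimension $2n+1$ with Boeckx invariant $I_N \le -1$. It suffices to construct a Lorentzian manifold $M^{n+1}$ of constant sectional curvature $c \le 0$ with $c \ne -1$ whose tangent hyperquadric bundle $T_{-1}M$, endowed with its standard contact metric structure, is non-Sasakian and has Boeckx invariant equal to $I_N$. Indeed, once such an $M$ is found, we have $\dim T_{-1}M = 2(n+1)-1 = 2n+1 = \dim N$, and both $N$ and $T_{-1}M$ are non-Sasakian $(k,\mu)$-spaces sharing the same invariant; Boeckx's theorem then yields that they are locally isometric as contact metric spaces up to a $\mathcal{D}$-homothetic transformation, which is precisely the asserted local equivalence.

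The core of the argument is thus an elementary surjectivity analysis of the function $c \mapsto \frac{c-1}{|c+1|}$ furnished by the preceding theorem. Restricting attention to $c \in (-1,0]$, where $|c+1| = c+1 > 0$, it reduces to $I(c) = \frac{c-1}{c+1}$, whose derivative $\frac{2}{(c+1)^2}$ is everywhere positive; hence $I$ is strictly increasing on $(-1,0]$, with $I(0) = -1$ and $I(c) \to -\infty$ as $c \to -1^+$. Therefore $c \mapsto I(c)$ is a bijection of $(-1,0]$ onto $(-\infty,-1]$. Given $I_N \le -1$, I would accordingly set
$$c = \frac{1+I_N}{1-I_N},$$
which is well defined since $1 - I_N > 0$, lies in $(-1,0]$, and satisfies $\frac{c-1}{c+1} = I_N$; in particular $c \ne -1$, so the preceding theorem guarantees that $T_{-1}M$ is non-Sasakian.

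To complete the construction I would invoke the classical existence of Lorentzian space forms: for every $c \le 0$ and every dimension there is a complete Lorentzian manifold of constant sectional curvature $c$, namely Minkowski space when $c = 0$ and an anti-de Sitter space form when $c < 0$. Taking $M^{n+1}$ of constant curvature equal to the value $c$ selected above, Theorem~\ref{1} ensures that $T_{-1}M$ is a locally symmetric pseudo-Hermitian manifold, hence a contact metric $(k,\mu)$-space, and the preceding theorem computes its Boeckx invariant as $I_N$, so Boeckx's theorem applies and concludes the proof.

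I do not anticipate a genuine obstacle: every ingredient is either a theorem established earlier or a one-variable computation with the rational function $I(c)$. The only points requiring mild care are the dimension bookkeeping — choosing $\dim M = n+1$ so that $\dim T_{-1}M$ matches $\dim N$ — and verifying that the curvature $c$ solving $I(c) = I_N$ genuinely falls inside the admissible range $(-1,0] \subset (-\infty,0]\setminus\{-1\}$, both of which are immediate from the monotonicity and inversion above.
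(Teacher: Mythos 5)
Your proposal is correct and follows exactly the argument the paper intends (the corollary is stated without a separate proof, being the immediate combination of Boeckx's classification theorem with the surjectivity of $c\mapsto\frac{c-1}{|c+1|}$ from $(-1,0]$ onto $(-\infty,-1]$, as already announced in the introduction). Your explicit inversion $c=\frac{1+I_N}{1-I_N}$ and the dimension bookkeeping are both accurate.
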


\vspace{3 mm}
Authors' addresses:

\medskip
\noindent 
Eugenia Loiudice

\vspace{1 mm}
\noindent Dipartimento di Matematica, Universit\`a di Bari Aldo Moro, 

\noindent Via Orabona 4, 70125 Bari, Italy

\noindent \emph{e-mail}: eugenia.loiudice@uniba.it

\vspace{3 mm}
\noindent 
Antonio Lotta

\vspace{1 mm}
\noindent Dipartimento di Matematica, Universit\`a di Bari Aldo Moro, 

\noindent Via Orabona 4, 70125 Bari, Italy

\noindent \emph{e-mail}: antonio.lotta@uniba.it


\begin{thebibliography}{}



\bibitem{Blair2010Book}
D.~E. Blair:
\newblock {{Riemannian geometry of contact and symplectic manifolds}},
  volume 203 of {\em {Progress in Mathematics}}.
\newblock Birkh\"auser, Boston Inc., Boston, MA, second edition, 2010.

\bibitem{Blair K P}
D.~E. Blair, T.~Koufogiorgos, and B.~J.~Papantoniou:
\newblock {\em Contact metric manifolds satisfying a nullity condition},
\newblock {Israel J. Math.} {\bf 91} (1995), 189–214.


\bibitem{Boeckx locally phi-symmetric}
E.~Boeckx:
\newblock {\em A class of locally $\varphi$-symmetric contact metric spaces},
\newblock {Arch. Math. (Basel)} {\bf 72} (1999), 466--472.


\bibitem{Boeckx}
E.~Boeckx:
\newblock {\em A full classification of contact metric $(k,\mu)$-spaces},
\newblock {Illinois J. Math.} {\bf 44} (2000), 212--219.



\bibitem{Boeckx-Cho}
E.~Boeckx and J. T.~Cho: \newblock{\em Pseudo-Hermitian symmetries}, \newblock{Israel J. Math.} {\bf 166} (2008), 125--145. 

\bibitem{Mino}
B.~Cappelletti Montano:
\newblock{\em The foliated structure of contact metric $(k,\mu)$-spaces}, \newblock{Illinois J. Math.} {\bf 53} (2009), no. 4, 1157--1172.

\bibitem{MinoDiTerlizzi}
B.~Cappelletti Montano and L.~Di Terlizzi: \newblock{ \em Contact metric $(k,\mu)$-spaces as bi-Legendrian manifolds}, \newblock{Bull. Aust. Math. Soc.}
{\bf 77} (2008), no. 3, 373--386.

\bibitem{DileoLotta}
G.~Dileo and A.~Lotta:
\newblock {\em A classification of spherical symmetric $CR$ manifolds.},
\newblock {Bull. Aust. Math. Soc.} {\bf 80} (2009), 251--274.

\bibitem{Perrone Dragomir}
S.~Dragomir and D.~Perrone:
\newblock {\em On the geometry of tangent hyperquadric bundles: $CR$ and pseudoharmonic vector fields},
\newblock {Ann. Global Anal. Geom.} {\bf 30} (2006), 211--238.



\bibitem{LibroDragomirTomassini}
S.~Dragomir and G.~Tomassini:
\newblock {Differential geometry and analysis on $CR$ manifolds},
{\em {Progress in Mathematics}}.
\newblock Birkh\"auser, Boston Inc., Boston, 2006.



\bibitem{Kaup Zaitsev}
 W.~Kaup and D.~Zaitsev:
\newblock {\em On symmetric Cauchy-Riemann manifolds},
\newblock {Adv. Math.} {\bf 149} (2000), 145--181.

\bibitem{ONeill}
B.~O'Neill: Semi-Riemannian geometry. With applications to relativity. Pure and Applied Mathematics, 103. Academic Press, Inc., New York, 1983.

\bibitem{Pang}
M.~Y. Pang:
\newblock {\em The structure of Legendre foliations},
\newblock {Trans. Amer. Math. Soc.} {\bf 320}  (1990), 417--455.


\bibitem{Perrone}
D.~Perrone:
\newblock {\em On the standard nondegenerate almost $CR$ structure of tangent hyperquadric bundles},
\newblock { Geom. Dedicata} {\bf }  (2016), doi:10.1007/s10711-016-0167-z.

\end{thebibliography}
\end{document}